\documentclass{amsart}[12pt]
\usepackage{xypic}
\usepackage{eucal}

\usepackage{amsmath}
\usepackage{amstext}
\usepackage{amssymb}
\usepackage{amsthm}
\usepackage{amscd}

\usepackage{lscape}
\usepackage{longtable}

\usepackage{epsfig}
\input txdtools
\let\et=\etexdraw
\def\etexdraw{\drawbb\et}

\setlength{\textwidth}{5.5in} \setlength{\oddsidemargin}{0in}
\setlength{\evensidemargin}{0in}

\theoremstyle{plain}
\newtheorem{thm}{Theorem}[section]
\newtheorem{thm*}{Theorem}

\newtheorem{prop}[thm]{Proposition}
\newtheorem{prop*}[thm*]{Proposition}

\newtheorem{conj}[thm]{Conjecture}

\theoremstyle{definition}

\theoremstyle{remark}

\DeclareMathOperator{\Hom}{Hom}



\begin{document}

\title
[A non-finitely generated algebra of Frobenius maps]
{A non-finitely generated algebra of Frobenius maps}

\author{Mordechai Katzman}
\address{Department of Pure Mathematics,
University of Sheffield, Hicks Building, Sheffield S3 7RH, United Kingdom}
\email{M.Katzman@sheffield.ac.uk}

\subjclass{Primary 13A35, 13E10}


\maketitle

\section{Introduction}\label{Section: Introduction}

The purpose of this paper is to answer a question raised by Gennady Lyubeznik and Karen Smith in \cite{Lyubeznik-Smith}.
This question involves the finite generation of a certain non-commutative algebra which we define below (cf.~section 3 in \cite{Lyubeznik-Smith}.)

Let $S$ be any commutative algebra of prime characteristic $p$.
For any $S$-module $M$ and all $e\geq 0$ we let $\mathcal{F}^e(M)$ denote the
set of all additive functions $\phi: M \rightarrow M$ with the property that $\phi(s m)=s^{p^e} \phi(m)$ for all
$s\in S$ and $m\in M$.
Note that for all $e_1, e_2 \geq 0$, and $\phi_1\in  \mathcal{F}^{e_1}(M)$, $\phi_2\in  \mathcal{F}^{e_2}(M)$
the composition
$\phi_2 \circ \phi_1$ is in $\mathcal{F}^{e_1+e_2}(M)$.
Note also that each $\mathcal{F}^{e}(M)$ is a module over $\mathcal{F}^{0}(M)=\Hom_{S}(M,M)$ via
$\phi_0 \phi=\phi_0 \circ \phi$.
We now define $\mathcal{F}(M)=\oplus_{e\geq 0}  \mathcal{F}^e(M)$ and endow it with the structure of a
$\Hom_{S}(M,M)$-algebra with multiplication given by composition.


In section \ref{Section: The example} below we construct an example of an Artinian module over a complete local ring $S$ for which
$\mathcal{F}(M)$ is not a finitely generated $\Hom_{S}(M,M)$-algebra, thus giving a negative answer to the question raised in section 3 of
\cite{Lyubeznik-Smith}.

\section{The example}\label{Section: The example}

Let $\mathbb{K}$ be a field of characteristic $p>0$, $R=\mathbb{K}[\![ x,y,z ]\!]$, and let $I\subseteq R$ be an ideal.
Let $E$ be the injective hull of the residue field of $R$ and let $f$ denote the standard Frobenius map of $E$ (cf.~section 4 in \cite{Katzman}.)
Write $S=R/I$ and let $E_S$ be the injective hull of the residue field of $S$.

Notice that as $S$ is complete, $\mathcal{F}^0(E_S)=\Hom_S(E_S,E_S)\cong S$;
the $S$-module $\mathcal{F}^e(E_S)$ of $p^e$th Frobenius maps on $E_S$ is given by
$(I^{[p^e]} : I) f^e$ (cf.~section 4 in \cite{Katzman}.)

For all $e\geq 1$ write $K_e=(I^{[p^e]} : I)$.
We define
$$L_e = \sum_{1\leq \beta_1, \dots, \beta_s < e\atop \beta_1 + \dots +  \beta_s =e}
K_{\beta_1} K_{\beta_2}^{[p^{\beta_1}]} K_{\beta_3}^{[p^{\beta_1+\beta_2}]} \cdot \dots \cdot K_{\beta_s}^{[p^{\beta_1+\dots +\beta_{s-1}}]} .$$

\begin{prop}
Fix any $e\geq 1$, and let $\mathcal{F}_{<e}$ be the $S$-subalgebra of $\mathcal{F}(E_S)$ generated by
$\mathcal{F}^0(E_S), \dots, \mathcal{F}^{e-1}(E_S)$.
We have $\mathcal{F}_{<e} \cap \mathcal{F}^e(E_S)=L_e f^e$.
\end{prop}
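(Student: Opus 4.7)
The plan is to work in the skew-polynomial-style presentation $\mathcal{F}^i(E_S)=K_i f^i$ (with the convention $K_0=S$) and exploit the commutation rule $f^a\cdot k=k^{p^a}f^a$, which follows immediately from the defining property $\phi(sm)=s^{p^a}\phi(m)$. From this rule, an inductive calculation shows that a composite of length $s$
\[
(k_s f^{\beta_s})\circ(k_{s-1}f^{\beta_{s-1}})\circ\cdots\circ(k_1 f^{\beta_1})
=\bigl(k_s\,k_{s-1}^{p^{\beta_s}}\,k_{s-2}^{p^{\beta_s+\beta_{s-1}}}\cdots k_1^{p^{\beta_s+\cdots+\beta_2}}\bigr)\,f^{\beta_1+\cdots+\beta_s}.
\]
After reindexing $\gamma_i=\beta_{s+1-i}$, the coefficient lies exactly in the summand
$K_{\gamma_1}K_{\gamma_2}^{[p^{\gamma_1}]}\cdots K_{\gamma_s}^{[p^{\gamma_1+\cdots+\gamma_{s-1}}]}$ of $L_e$.

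Given this, the containment $L_e f^e\subseteq \mathcal{F}_{<e}\cap\mathcal{F}^e(E_S)$ is immediate: every summand of $L_e$ is by construction a coefficient of a composite of maps of degrees $\beta_i<e$, and the ideals $K_{\beta_i}$ (resp.\ their Frobenius powers) absorb the $S$-coefficients, so every element of $L_e f^e$ is an $S$-linear combination of such composites, hence lies in $\mathcal{F}_{<e}$.

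For the reverse inclusion, I would take an arbitrary element of $\mathcal{F}_{<e}\cap\mathcal{F}^e(E_S)$. As an element of the subalgebra generated by $\mathcal{F}^0(E_S),\dots,\mathcal{F}^{e-1}(E_S)$, it is an $S$-linear combination of composites of generators; projecting onto the homogeneous component of degree $e$ forces each composite to have its degree exponents $\beta_1,\dots,\beta_s\in\{0,1,\dots,e-1\}$ sum to $e$. The key reduction is to remove all zero-degree factors from each composite. A factor $\phi_j\in\mathcal{F}^0(E_S)=S$ can be pushed into an adjacent factor of positive degree using $f^a\cdot s = s^{p^a}f^a$ and the fact that each $K_{\beta_i}$ is an $S$-ideal: the resulting coefficient stays inside $K_{\beta_{j\pm 1}}$. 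Iterating, each composite is rewritten as one in which every $\beta_i\geq 1$; since no $\beta_i$ can equal $e$, we must have $s\geq 2$, matching the index set in $L_e$. Applying the composition formula to what remains exhibits the element as a sum of terms of $L_e f^e$.

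The only delicate point is bookkeeping: one must check that the absorption of degree-$0$ factors is consistent at both ends of a composite and does not alter the combinatorial shape, and that the reindexing between the composition order and the order used in the definition of $L_e$ is carried out correctly. Once the commutation relation and the $S$-ideal property of the $K_i$ are in hand, both inclusions reduce to routine (if notation-heavy) manipulations; I expect the forward inclusion to be trivial and the absorption argument for the reverse inclusion to be the only step requiring care.
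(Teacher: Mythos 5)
Your proof is correct and follows essentially the same route as the paper: write elements of $\mathcal{F}_{<e}\cap\mathcal{F}^e(E_S)$ as sums of composites of homogeneous generators, use the skew-commutation $f^a\cdot k=k^{p^a}f^a$ to compute the coefficient and land in a summand of $L_e$, and conversely realize each summand of $L_e$ times $f^e$ as such a composite. Your explicit absorption of degree-$0$ factors and the reindexing of the composition order are just careful bookkeeping of steps the paper leaves implicit, not a different argument.
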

\begin{proof}
Any element in $\mathcal{F}_{<e} \cap \mathcal{F}^e(E_S)$ can be written as a sum of elements
of the form
$\phi_1 \cdot \dots \cdot \phi_s$ where for all $1\leq j\leq s$ we have
$\phi_j\in   \mathcal{F}^{\beta_j}(E_S)$ ($1\leq \beta_j<e$)
and $\beta_1+ \dots + \beta_s=e$.
Each such $\phi_j$ equals $a_j f^{\beta_j}$ where $a_j\in K_{\beta_j}$, so
$$
\phi_1 \cdot \dots \cdot \phi_s =
a_1 f^{\beta_1} a_2 f^{\beta_2} a_3 f^{\beta_3} \cdot \dots \cdot a_s f^{\beta_s}=
a_1  a_2^{p^\beta_1} a_3^{p^{\beta_1+\beta_2}} \cdot \dots \cdot a_s^{p^{\beta_1 + \dots + \beta_{s-1}}} f^{\beta_1 + \dots + \beta_s} \in L_e f^e$$
so $\mathcal{F}_{<e} \cap \mathcal{F}^e(E_S) \subseteq L_e f^e$.

On the other hand, for all $1\leq \beta_1, \dots, \beta_s < e$ such that $\beta_1 + \dots +  \beta_s=e$,
$$K_{\beta_1} K_{\beta_2}^{[p^{\beta_1}]} K_{\beta_3}^{[p^{\beta_1+\beta_2}]} \cdot \dots \cdot K_{\beta_s}^{[p^{\beta_1+\dots +\beta_{s-1}}]}
\subseteq (I^{[p^{\beta_1+\dots+\beta_s}]} : I) = (I^{[p^e]} : I)$$
so $L_e f^e \subseteq (I^{[p^e]} : I) f^e= \mathcal{F}^e(E_S)$. A similar argument to the one in the previous paragraph shows that we also have
$$K_{\beta_1} K_{\beta_2}^{[p^{\beta_1}]} K_{\beta_3}^{[p^{\beta_1+\beta_2}]} \cdot \dots \cdot K_{\beta_s}^{[p^{\beta_1+\dots +\beta_{s-1}}]} f^e
\subseteq \mathcal{F}_{<e}$$
and we deduce that $L_e f^e \subseteq \mathcal{F}_{<e} \cap \mathcal{F}^e(E_S)$.
\end{proof}

Fix now $I$ to be the ideal generated by $xy$ and $yz$.
We show that $\mathcal{F}(M)$ is not a finitely generated $S$-algebra.

\begin{prop}
For all $e\geq 1$, $K_e$ is generated by
$$\left\{ x^{p^e} y^{p^e-1}, x^{p^e-1} y^{p^e-1} z^{p^e-1}, y^{p^e-1}z^{p^e} \right\} .$$
\end{prop}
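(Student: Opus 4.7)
The approach rests on the observation that $I=(xy,yz)$ is a monomial ideal, so $I^{[p^e]}=(x^{p^e}y^{p^e},\,y^{p^e}z^{p^e})$ is monomial as well, and consequently the colon $K_e=(I^{[p^e]}:I)$ is itself generated by monomials. Indeed, for any monomial ideal $J$ and any monomial $g$ the colon $(J:g)$ is monomial (a power series $f$ satisfies $fg\in J$ iff every monomial of $f$ does), and $(J:I)=(J:xy)\cap(J:yz)$ is an intersection of monomial ideals. It therefore suffices to determine which monomials lie in $K_e$.

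Containment of the three candidate generators in $K_e$ is a direct check: one multiplies each of $x^{p^e}y^{p^e-1}$, $x^{p^e-1}y^{p^e-1}z^{p^e-1}$, and $y^{p^e-1}z^{p^e}$ in turn by $xy$ and by $yz$, and observes that the result is divisible by $x^{p^e}y^{p^e}$ or by $y^{p^e}z^{p^e}$, and hence lies in $I^{[p^e]}$.

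For the reverse inclusion, I would take an arbitrary monomial $m=x^ay^bz^c\in K_e$. The requirement $xy\cdot m=x^{a+1}y^{b+1}z^c\in I^{[p^e]}$ forces either ($a+1\geq p^e$ and $b+1\geq p^e$) or ($b+1\geq p^e$ and $c\geq p^e$), and the requirement $yz\cdot m=x^ay^{b+1}z^{c+1}\in I^{[p^e]}$ forces either ($a\geq p^e$ and $b+1\geq p^e$) or ($b+1\geq p^e$ and $c+1\geq p^e$). A uniform preliminary consequence is $b\geq p^e-1$. Running through the four combinations of these two binary disjunctions, I would verify that $(a,b,c)$ always forces $m$ to be divisible by one of the three listed generators: for instance, the mixed case ($a+1\geq p^e$ and $c+1\geq p^e$) yields divisibility by $x^{p^e-1}y^{p^e-1}z^{p^e-1}$, the pure cases ($a\geq p^e$ throughout) or ($c\geq p^e$ throughout) yield divisibility by $x^{p^e}y^{p^e-1}$ or by $y^{p^e-1}z^{p^e}$, respectively, and the remaining mixed case ($a\geq p^e$ and $c\geq p^e$) gives divisibility by either of those two.

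There is no conceptual obstacle here; the only thing requiring care is the bookkeeping in the four-way case analysis, and the slight temptation to miss the middle generator $x^{p^e-1}y^{p^e-1}z^{p^e-1}$, which arises precisely from the mixed case in which $xy\cdot m$ uses the first generator of $I^{[p^e]}$ while $yz\cdot m$ uses the second.
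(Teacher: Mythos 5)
Your proof is correct and takes essentially the same route as the paper: both reduce membership in $K_e$ to the two conditions $xy\cdot m\in I^{[p^e]}$ and $yz\cdot m\in I^{[p^e]}$, i.e.\ to the intersection $(I^{[p^e]}:xy)\cap(I^{[p^e]}:yz)$. The only difference is presentational — the paper computes each colon as a monomial ideal and intersects them via the least-common-multiple formula, while you carry out the equivalent four-way divisibility analysis monomial by monomial, your four cases being exactly the four pairwise LCMs (with the mixed case producing the generator $x^{p^e-1}y^{p^e-1}z^{p^e-1}$, just as in the paper's computation).
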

\begin{proof}
For any $q>1$,
\begin{eqnarray*}
( x^{q} y^{q} , y^{q} z^{q} ) : (x y , y z )& = & \left( ( x^{q} y^{q}, y^{q} z^{q} ) : x y  \right) \cap
                                                            \left( ( x^{q} y^{q}, y^{q} z^{q} ) :  y z  \right)  \\
    & = & (x^{q-1} y^{q-1} , y^{q-1} z^{q}) \cap   (x^{q} y^{q-1}, y^{q-1} z^{q-1}) \\
    & = & (x^{q} y^{q-1} , x^{q-1} y^{q-1} z^{q-1} , x^{q} y^{q-1}z^{q} , y^{q-1} z^{q} ) \\
    & = & (x^{q} y^{q-1} , x^{q-1} y^{q-1} z^{q-1} ,  y^{q-1} z^{q} )
\end{eqnarray*}
\end{proof}

\begin{thm}
The $S$-algebra $\mathcal{F}(E_S)$ is not finitely generated.
\end{thm}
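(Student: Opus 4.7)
The plan is to use Proposition~1 combined with an explicit witness. If $\mathcal{F}(E_S)$ were generated as an $S$-algebra by finitely many homogeneous elements of degrees at most $N$, then for every $e>N$ every element of $\mathcal{F}^e(E_S)$ would be a sum of products of elements of strictly smaller degrees, so $\mathcal{F}^e(E_S) = \mathcal{F}_{<e}\cap\mathcal{F}^e(E_S)$. By Proposition~1 and the standard identification $\mathcal{F}^e(E_S)\cong K_e/I^{[p^e]}$, this would force $K_e \subseteq L_e + I^{[p^e]}$. Hence it suffices to show that this inclusion fails for infinitely many~$e$.

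My witness, for every $e\geq 2$, is the monomial $m_e := x^{p^e}y^{p^e-1}$, which Proposition~2 lists among the generators of $K_e$. The ideals $L_e$ and $I^{[p^e]}$ are monomial, so their sum is monomial, and $m_e$ lies in it if and only if some monomial generator divides it. The two generators $x^{p^e}y^{p^e}$ and $y^{p^e}z^{p^e}$ of $I^{[p^e]}$ patently do not divide $m_e$, so I only need to handle the monomial generators of $L_e$.

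Each such generator has the form
\[
a_1\cdot a_2^{p^{\beta_1}}\cdot a_3^{p^{\beta_1+\beta_2}}\cdots a_s^{p^{\beta_1+\cdots+\beta_{s-1}}},
\]
with $a_j$ one of the three generators of $K_{\beta_j}$ from Proposition~2, $1\leq \beta_j<e$, and $\beta_1+\cdots+\beta_s=e$, which in particular forces $s\geq 2$. For this monomial to divide the $z$-free $m_e$, every factor $a_j$ must be $z$-free, forcing $a_j=x^{p^{\beta_j}}y^{p^{\beta_j}-1}$ for all $j$. A short telescoping computation, letting $\gamma_j=\beta_1+\cdots+\beta_{j-1}$, then gives $x$-exponent $\sum_{j=1}^{s}p^{\gamma_{j+1}} = p^{\beta_1}+p^{\beta_1+\beta_2}+\cdots+p^e$ and $y$-exponent $\sum_{j=1}^s(p^{\gamma_{j+1}}-p^{\gamma_j})=p^e-1$. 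The $y$-exponent matches that of $m_e$, but the $x$-exponent is at least $p^{\beta_1}+p^e>p^e$ since $s\geq 2$. Hence no such product divides $m_e$, so $m_e\notin L_e+I^{[p^e]}$ for every $e\geq 2$, producing the required infinite family.

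The only real obstacle is the telescoping exponent computation; everything else is bookkeeping. A small subtlety is to check that the generators of a sum of products of bracket-powers of monomial ideals really do take the enumerated form, but this follows immediately from the fact that the product of monomial ideals is generated by products of their monomial generators and that the bracket-power operation preserves monomiality.
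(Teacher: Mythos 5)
Your proof is correct and follows essentially the same route as the paper: the same witness $x^{p^e}y^{p^e-1}\in K_e$, the same reduction to the $z$-free generators of the $K_{\beta_j}$, and the same $x$-exponent count $p^{\beta_1}+p^{\beta_1+\beta_2}+\cdots+p^e>p^e$ forced by $s\geq 2$. Your only deviations are cosmetic improvements: you make explicit the reduction from finite generation to $\mathcal{F}^e(E_S)\subseteq\mathcal{F}_{<e}$ for large $e$, and you rule out the kernel contribution by checking $m_e\notin L_e+I^{[p^e]}$ rather than just $m_e\notin L_e$, which is a slightly more careful version of the paper's step.
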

\begin{proof}
It is enough to show that for all $e\geq 1$, $\mathcal{F}(E_S)$ is not in $\mathcal{F}_{<e}$ and we establish this by showing that the
generator $x^{p^e} y^{p^e-1}$ of $K_e$ is not in $L_e$.

Since $L_e$ is a sum of monomial ideals, $x^{p^e} y^{p^e-1}\in L_e$ if and only if $x^{p^e} y^{p^e-1}$ is in one of the summands.
So we now fix $e\geq 1$ and
$1\leq \beta_1,  \dots , \beta_s<e$ such that $\beta_1 + \dots +  \beta_s =e$,
and show that the ideal
$$K_{\beta_1} K_{\beta_2}^{[p^{\beta_1}]} K_{\beta_3}^{[p^{\beta_1+\beta_2}]} \cdot \dots \cdot K_{\beta_s}^{[p^{\beta_1+\dots +\beta_{s-1}}]} $$
does not contain
$x^{p^e} y^{p^e-1}$.

Since $z$ does not occur in $x^{p^e} y^{p^e-1}$, it is enough to show that with $J_e=x^{p^e} y^{p^e-1} R$,
$$J_{\beta_1} J_{\beta_2}^{[p^{\beta_1}]} J_{\beta_3}^{[p^{\beta_1+\beta_2}]} \cdot \dots \cdot J_{\beta_s}^{[p^{\beta_1+\dots +\beta_{s-1}}]} $$
does not contain
$x^{p^e} y^{p^e-1}$.
The exponent of $x$ in the generator of the product above is
$$p^{\beta_1 + (\beta_1+\beta_2) + \dots + (\beta_1+\dots +\beta_s)}>p^{\beta_1+\dots +\beta_s}=p^e $$
where the inequality follows from the fact that we must have $s>1$.
\end{proof}

\section{A conjecture}\label{Section: A conjecture}

Although the example in section \ref{Section: The example} settles the question raised in \cite{Lyubeznik-Smith}, one
might still raise the question of whether such examples exist over ``nice'' rings, e.g., normal domains.

Let $\mathbb{K}$ be a field of prime characteristic $p$, let $R=\mathbb{K}[\![ x,y,z,u,v,w]\!]$ and let $I$ be the ideal
generated by the $2\times 2$ minors of the matrix
$\left(
\begin{array}{lll} x&y&z\\ u&v&w \end{array}
\right)$.

The ring $S=R/I$ is a normal, Cohen-Macaulay domain (cf.~Theorem 7.3.1 in \cite{Bruns-Herzog}.)
Let $E_S$ be the injective hull of the residue field of $S$ and, as before, for all $e\geq 1$
let $\mathcal{F}_{<e}$ be the $S$-subalgebra of $\mathcal{F}^e(E_S)$ generated by
$\mathcal{F}^1(E_S), \dots , \mathcal{F}^{e-1}(E_S)$. Note that $\mathcal{F}^0(E_S)=S$.

\begin{conj}
For all $e\geq 1$, $\mathcal{F}^e(E_S)$ is not contained in $\mathcal{F}_{<e}$ and hence
$\mathcal{F}^e(E_S)$ is not a finitely generated $S$-algebra.
\end{conj}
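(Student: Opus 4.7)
The approach imitates that of Section~\ref{Section: The example}. The Proposition of that section applies verbatim (its proof uses no special property of $I$), so the conjecture reduces, for each $e\ge 1$, to showing that $K_e:=(I^{[p^e]}:I)$ is not contained in $L_e+I^{[p^e]}$, where
\[
L_e = \sum_{\substack{1\le \beta_1,\dots,\beta_s<e\\ \beta_1+\cdots+\beta_s=e}} K_{\beta_1} K_{\beta_2}^{[p^{\beta_1}]}\cdots K_{\beta_s}^{[p^{\beta_1+\cdots+\beta_{s-1}}]};
\]
equivalently, one should exhibit an explicit element of $K_e$ lying outside $L_e+I^{[p^e]}$. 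The case $e=1$ is immediate, since $L_1=0$ and $K_1\supsetneq I^{[p]}$ by $F$-purity of the determinantal ring $S$, so the main content is $e\ge 2$.

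\textbf{Bigraded setup.} Equip $R$ with the $\mathbb{Z}^2$-grading giving $x,y,z$ bidegree $(1,0)$ and $u,v,w$ bidegree $(0,1)$. Every generator of $I$ is bihomogeneous of bidegree $(1,1)$, so $I^{[p^e]}$, $K_e$ and $L_e$ are bihomogeneous. Writing $\delta_j:=\beta_1+\cdots+\beta_{j-1}$, every nonzero element of the summand $K_{\beta_1} K_{\beta_2}^{[p^{\delta_2}]}\cdots K_{\beta_s}^{[p^{\delta_s}]}$ of $L_e$ has both bidegree-coordinates at least $\sum_{j=1}^s p^{\delta_j} m_{\beta_j}$, where $m_\beta$ denotes the minimum bidegree-coordinate of a nonzero element of $K_\beta$. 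Similarly every nonzero element of $K_e$ has bidegree-coordinates at least $m_e$. The plan is to compare $m_e$ with $\sum_j p^{\delta_j} m_{\beta_j}$ and force a strict inequality when $s\ge 2$.

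\textbf{Main steps.} (i) Compute $m_\beta$, and more generally the bigraded structure of $K_\beta$, using the Hilbert--Burch resolution of $I$ whose syzygy matrix is the transpose of the given $2\times 3$ matrix. I expect $m_\beta = p^\beta$, strictly larger than the trivial bound $p^\beta-1$, because the candidate $f_j^{p^\beta-1}$ of bidegree $(p^\beta-1,p^\beta-1)$ fails: the products $f_j^{p^\beta-1} f_k$ for $k\ne j$ are not in $I^{[p^\beta]}$. (ii) Produce an explicit minimum-bidegree generator $g_e\in K_e$, playing the role that $x^{p^e}y^{p^e-1}$ played in Section~\ref{Section: The example}. (iii) If indeed $m_\beta=p^\beta$ for all $\beta$, then
\[
\sum_{j=1}^s p^{\delta_j} m_{\beta_j} = \sum_{j=1}^s p^{\delta_j+\beta_j} = \sum_{j=1}^s p^{\delta_{j+1}} \ge p^e + p^{\beta_1} > p^e = m_e
\]
whenever $s\ge 2$, exactly paralleling the exponent argument at the end of Theorem~2.3, so $g_e\notin L_e+I^{[p^e]}$.

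\textbf{Main obstacle.} The central difficulty is step~(i). In the absence of monomial generators for $I$, describing $K_\beta$ explicitly is substantially more delicate than in the monomial case and will likely require working with the Eagon--Northcott or Buchsbaum--Rim complex of the defining matrix. Should the optimistic prediction $m_\beta = p^\beta$ fail in some bidegree, the bidegree invariant must be replaced by the divisor class of $g_e$ in $\mathrm{Cl}(S)\cong\mathbb{Z}$; since $S$ is not $\mathbb{Q}$-Gorenstein and its canonical class has infinite order, one expects the divisor classes representable by Frobenius-twisted products appearing in $L_e$ with $s\ge 2$ to form a strict subset of the classes attained in $K_e$, delivering the required separation. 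At the deepest level the conjecture appears tied to the non-finite generation of the symbolic Rees algebra of the canonical module of $S$.
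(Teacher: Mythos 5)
The statement you have attacked is the Conjecture of Section \ref{Section: A conjecture}; the paper does not prove it (it offers only Macaulay2 evidence for small $e$ in characteristic $2$), so your proposal has to stand entirely on its own, and as written it is a programme rather than a proof: its pivotal step (i) is explicitly left open, and the divisor-class fallback is stated as an expectation with no argument. The parts that are solid are the reduction itself — Proposition 2.1 is indeed independent of the choice of $I$, the correct target is an element of $K_e$ outside $L_e+I^{[p^e]}$, and the case $e=1$ does follow from $F$-purity via Fedder's criterion.

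The deeper problem is that the bigraded strategy of steps (ii)--(iii) cannot succeed even if the prediction $m_\beta=p^\beta$ is literally true, because that minimum is attained by elements of $I^{[p^e]}$ itself (e.g.\ $\Delta^{p^e}$ for a $2\times 2$ minor $\Delta$, of bidegree $(p^e,p^e)$), and such elements induce the zero map on $E_S$; a useful witness must lie in $K_e\setminus I^{[p^e]}$, and there the comparison telescopes instead of separating. Concretely, set $\Delta_2=xw-zu$, $\Delta_3=xv-yu$ and $g_\beta=(\Delta_2\Delta_3)^{p^\beta-1}$. Using that $I^{[q]}$ is $I$-primary (since $R/I$ is Cohen--Macaulay and Frobenius is flat on $R$, $R/I^{[q]}$ is Cohen--Macaulay) together with the relation $x\Delta_1=y\Delta_2-z\Delta_3$, one checks $g_\beta I\subseteq I^{[p^\beta]}$, and $g_\beta\notin\mathfrak{m}^{[p^\beta]}$ because the monomial $(xyuw)^{p^\beta-1}$ occurs in it with coefficient $\pm 1$; so $g_\beta\in K_\beta\setminus I^{[p^\beta]}$ has bidegree $(2p^\beta-2,\,2p^\beta-2)$. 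Now the exponents telescope: for any composition $\beta_1+\dots+\beta_s=e$ one has $\sum_{j}p^{\delta_j}(p^{\beta_j}-1)=p^e-1$, so $g_{\beta_1}\,g_{\beta_2}^{p^{\delta_2}}\cdots g_{\beta_s}^{p^{\delta_s}}=(\Delta_2\Delta_3)^{p^e-1}=g_e$ lies in $L_e$ for every $e\ge 2$, in exactly the same bidegree as the corresponding new element of $K_e$. Thus, unlike the $x$-exponent count of Section \ref{Section: The example}, no $\mathbb{Z}^2$-degree bound can distinguish $K_e$ from $L_e+I^{[p^e]}$: the genuine content of the conjecture is what happens inside a fixed bidegree, which neither your step (i) nor the undeveloped class-group argument addresses (you would need to show that the classes, or finer twists, reached by Frobenius-twisted products form a proper subset, and nothing in the proposal begins that comparison). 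Indeed, the eventual resolution of this conjecture in the literature goes through the identification of $S$ with the completed Segre product of polynomial rings in two and three variables and an explicit determination of all Frobenius operators on $E_S$, not through a degree count; so the gap here is not a missing computation but a strategy that provably cannot deliver the separation.
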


I have tested this conjecture using the computer system Macaulay2 (\cite{Macaulay2}), and, for example, in characteristic 2, it holds
for $1\leq e\leq 6$.

\end{document}